\documentclass[article,oneside,oldfontcommands,b5paper,10pt]{memoir}

\counterwithout{section}{chapter}
\setsecnumdepth{subsubsection}

\setsecheadstyle{\bf\raggedright}
\setsubsecheadstyle{\sl\raggedright}
\setsubsubsecheadstyle{\sl\raggedright}
\setsecnumformat{\csname the#1\endcsname.\;}

\usepackage[affil-it]{authblk}

\usepackage{indentfirst}
\usepackage[margin=1.05 in]{geometry}

\usepackage[numbers,sort&compress]{natbib}


\usepackage{amssymb,amsmath,mathrsfs}
\usepackage{amsthm,stmaryrd}
\usepackage[hypertexnames=false,colorlinks,linkcolor={blue},citecolor={blue}]{hyperref}
\usepackage{graphicx,float,caption}

\usepackage{epsfig,subcaption}

\usepackage{scalerel} 

\usepackage{enumitem}
\setlist{
leftmargin=*, 
topsep = 4pt, 
itemsep = 1pt, 
}

\usepackage[T1]{fontenc}


\usepackage{fancyhdr}
\fancypagestyle{basic}{
\fancyhf{}
\fancyhead[L]{\thetitle}
\fancyhead[R]{\thepage}
}
\fancypagestyle{draftfirstpage}{

\fancyhf{}
}

\pagestyle{basic}

\numberwithin{equation}{section}

\newtheorem{theorem}{Theorem}[section]

\newtheorem{lemma}[theorem]{Lemma}

\theoremstyle{definition}
\newtheorem{definition}[theorem]{Definition}

\theoremstyle{remark}

\allowdisplaybreaks

\newcommand{\reDeclareMathOperator}[2]{\let#1\undefined \DeclareMathOperator{#1}{#2}}
\newcommand{\reDeclareMathOperatorL}[2]{\let#1\undefined \DeclareMathOperator*{#1}{#2}}

\reDeclareMathOperator{\mod}{mod}

\reDeclareMathOperator{\supp}{supp}

\reDeclareMathOperator{\Proj}{Proj}
\DeclareMathOperator{\gr}{gr}

\DeclareMathOperator{\fix}{fix}
\DeclareMathOperator{\Endpoints}{end}
\DeclareMathOperator{\EP}{ep}

\renewcommand{\emptyset}{\varnothing}
\renewcommand{\bar}{\overline}

\newcommand{\R}{\mathbb{R}}
\newcommand{\N}{\mathbb{N}}

\renewcommand{\multimap}{\rightrightarrows}

\renewcommand{\tilde}{\widetilde}

\let\originalbigcap\bigcap
\let\originalbigcup\bigcup
\reDeclareMathOperatorL{\bigcap}{\mathbin{\scaleobj{0.9}{\originalbigcap}}}
\reDeclareMathOperatorL{\bigcup}{\mathbin{\scaleobj{0.9}{\originalbigcup}}}

\makeatletter
\renewcommand*{\@fnsymbol}[1]{\ifcase#1\or*\else\@arabic{#1}\fi}
\makeatother

\title{Variational principles using a non-symmetric non-triangular distance}

\author[1]{Natthaya Boonyam}
\author[1,2]{Parin Chaipunya\thanks{Corresponding author}$^{,}$}
\author[1,2]{Poom Kumam}

\affil[1]{
{Department of Mathematics, Faculty of Science, King Mongkut's University of Technology Thonburi},
{126 Pracha Uthit Rd.},
{Bang Mod, Thung Khru, Bangkok},
{10140}, 
{Thailand.}
}
\affil[2]{
{Center of Excellence in Theoretical and Computational Science (TaCS-CoE), King Mongkut's University of Technology Thonburi},
{126 Pracha Uthit Rd.},
{Bang Mod, Thung Khru, Bangkok},
{10140}, 
{Thailand.}

Email: natthaya.b@mail.kmutt.ac.th, parin.cha@kmutt.ac.th and  poom.kum@kmutt.ac.th;
\vspace{.25cm}
}

\date{}

\begin{document}
\maketitle \vspace{-1.8cm}
\thispagestyle{draftfirstpage}

\newcommand{\pre}{\preccurlyeq_{\varepsilon,\varphi}}

\begin{abstract}
  We consider Borwein-Preiss and Ekeland variational principles using distance functions that neither is symmetric nor enjoy the triangular inequality.
  All the given results rely exclusively on the convergence and continuity behaviors induced synthetically by the distance function itself without any topological implications.
  At the end of the paper, we also present two applications; the Caristi fixed point theorem and an existence theorem for equilibrium problems.

	
	
	\noindent{\bf Keywords:}  Borwein-Preiss variational principle; Non-triangular distance.
	
\end{abstract}


\normalsize

\section{Introduction}

Variational principles have been one of the cornerstones of the contemporary variational analysis.
The concept was initiated by \citet{zbMATH03449362} where he proved his celebrated theorem known today as the \emph{Ekeland variational principle}.
The classical Weierstra\ss{} theorem guarantees a global minimum of a lower semicontinuous function on a compact set, while the Ekeland variational principle says that a lower semicontinuous function that is bounded below could be \emph{perturbed} ever so slightly, with a metric function, to attain a minimum.
It also provides a geometric insight of a global minimum.
When a function has a global minimum, one could construct a hyperplane that supports the whole graph at the minimizer.
On the other hand, the Ekeland variational principle asserts that we could always use a cone of a given \emph{width} to support the graph of any lower semicontinuous function that is bounded below near its approximate minimum.

One downside of the Ekeland variational principle is that the perturbation kernel, which is the metric function, is not differentiable.
The smooth kernel alternative was proposed in \cite{zbMATH04028246}, where the so-called \emph{Borwein-Preiss variational principle} was proved for Banach spaces.
The Borwein-Preiss variational principle is not a generalization of the Ekeland variational principle and it is not possible to deduce one from another.
The Borwein-Priess variational principle was generalized into metric spaces using gauge-type functions by \citet{zbMATH01496358}.
Eventough there are other forms of variational principles ({\itshape e.g.}~\cite{zbMATH03227405}), the two mentioned above  were the most attended ones.
To list few examples, the Ekeland variational principle were generalized into quasi-metric spaces~\cite{zbMATH07103811, zbMATH05888209}, $b$-metric spaces~\cite{zbMATH05871143}, partial metric spaces~\cite{zbMATH07271705}.
In~\cite{ESHGHINEZHAD20131250} and~\cite{zbMATH07752868}, generalized distances were used to established a version of Ekeland variational principle but it is interesting to note that both still require the triangle inequality to be satisfied.
In the setting of a Banach space, \citet{zbMATH01736511} proposed a variational principle of Borwein-Preiss type in which the perturbation kernel is not necessarily a norm.
The Borwein-Preiss variational principle was vastly refined by \citet{zbMATH06522706} in the framework of a metric space.
It should be noted that even when a non-trinangular function is used in a metric space setting, most of the time~\cite{zbMATH07752868, zbMATH06187183, zbMATH01496358, KHANH20102245} the triangular inequality of a metric is still involved {\itshape e.g.} in the form of Cantor's intersection theorem.

In this paper, we generalize variational principles {\itshape \`{a} la} Borwein-Preiss and Ekeland using a distance function that is not necessarily be symmetric nor satisfy triangle inequality.
The assumptions of the two variational principles are based only on the sequential behaviors induced by the given distance function.
One would see that the proofs are based on a relaxed form of Cantor's intersection theorem.
We also present two applications of the our variational principles. 
In the first application, the Caristi fixed point theorem using non-symmetric and non-triangular distances was proved.
Finally, we adopt the compactness based on the distance function~$d$ and prove an existence result for an equilibrium problem.

\section{Definitions and lemmas}

We consider in this section notation and preliminary results that are used in the subsequent sections.
We begin with a very general approach to a distance function on any nonempty set.
\begin{definition}
  Given a nonempty set~$X$.
  A function~$d : X \times X \to [0,\infty)$ is called a \emph{distance function} if for any~$x,y \in X$, $d(x,y) = 0 \iff x = y$.
\end{definition}
The definition of a distance function is so general that an explicit example is not needed.
In particular, it includes a metric space, quasi-metric space~\cite{zbMATH07900262, zbMATH07103811, zbMATH05888209}, $b$-metric space~\cite{zbMATH00898043}, $JS$-metric space~\cite{zbMATH06433111}, non-triangular metric space~\cite{zbMATH07289654}, {\itshape et cetera}.
Despite the vast possibilities, we chiefly target the~\emph{$\ell^{p}$/$L^{p}$-type distances with~$p \in [0,1)$} including the \emph{$p$-Wasserstein distances} with either~$p \in [0,1)$ or with general cost function, as well as the \emph{divergences} ({\itshape e.g.} the Euclidean, Bregman, and Kullback-Leibler divergences) in information geometry.
These divergences are, in general, non-symmetric and no not enjoy the triangle inequality.

Next, we precise the convergence concepts of sequences with respect to a distance function.
\begin{definition}
  Suppose that~$d$ is a distance function on~$X$.
  A sequence~$(x_{i})$ in~$X$ is said to be
  \begin{itemize}[label=$\circ$, leftmargin=*]
    \item 
      \emph{right~$d$-convergent} to a \emph{right~$d$-limit}~$\bar{x} \in X$ if~$\lim_{i \to \infty} d(x,x_{i}) = 0$,
    \item 
      \emph{left~$d$-convergent} to a \emph{left~$d$-limit}~$\bar{x} \in X$ if~$\lim_{i \to \infty} d(x_{i},x) = 0$,
    \item 
      \emph{right~$d$-Cauchy} if~$\lim_{i \to \infty} \sup_{j \geq i} d(x_{j},x_{i}) = 0$,
    \item 
      \emph{left~$d$-Cauchy} if~$\lim_{i \to \infty} \sup_{j \geq i} d(x_{i},x_{j}) = 0$.
  \end{itemize}
\end{definition}
We could also specify a more strict behavior of a distance function regarding the habits of sequences.
\begin{definition}
  A distance function~$d$ on a set~$X$ is said to be 
  \begin{itemize}[label=$\circ$, leftmargin=*]
    \item 
      \emph{right Hausdorff} (resp. \emph{left Hausdorff}) if every right~$d$-convergent (resp. left~$d$-convergent) sequence in~$X$ has a unique limit,
    \item 
      \emph{right complete} (resp. \emph{left complete}) if every right~$d$-Cauchy (resp. left~$d$-Cauchy) sequence in~$X$ is right~$d$-convergent (resp. left~$d$-convergent).
  \end{itemize}
\end{definition}

Now we turn to continuity of functions with respect to a distance function.
\begin{definition}
  A function~$f : X \to \R \cup \{+\infty\}$ is said to be
  \begin{itemize}[label=$\circ$, leftmargin=*]
    \item 
      \emph{right sequentially~$d$-continuous} (resp. \emph{left sequentially~$d$-continuous}) at~$x \in X$ if~$\lim_{i \to \infty} f(x_{i}) = f(x)$ whenever~$(x_{i})$ is a sequence in~$X$ that is right (resp. left)~$d$-convergent to~$x$,
    \item 
      \emph{right sequentially~$d$-lower semicontinuous} (resp. \emph{left sequentially~$d$-lower semicontinuous}) at~$x \in X$ if
      \begin{align*}
        \liminf_{i \to \infty} f(x_{i}) \geq f(x)
      \end{align*}
      whenever~$(x_{i})$ is a sequence in~$X$ that is right (resp. left)~$d$-convergent to~$x$,
    \item 
      \emph{right sequentially~$d$-upper semicontinuous} (resp. \emph{left sequentially~$d$-upper semicontinuous}) at~$x \in X$ if
      \begin{align*}
        \limsup_{i \to \infty} f(x_{i}) \leq f(x)
      \end{align*}
      whenever~$(x_{i})$ is a sequence in~$X$ that is right (resp. left)~$d$-convergent to~$x$.
  \end{itemize}
\end{definition}

Obviously, every right (resp. left) sequentially~$d$-continuous function is right (resp. left) sequentially~$d$-lower semicontinuous.
For each fixed~$x \in X$, the functions~$d(x,\cdot)$ itself is right sequentially~$d$-continuous at~$x$, but there is no reason for a distance function to be either right or left sequentially~$d$-lower semicontinuous at other points.
Under this framework, it is even possible to impose continuity assumptions on a distance function~$d$ with respect to another distance function, say~$\rho$.

In this paper, we always require that~$d$ is \emph{separately right sequentially~$d$-lower semicontinuous}, {\itshape i.e.}~for each fixed point~$x \in X$, both functions~$d(x,\cdot)$ and~$d(\cdot,x)$ are right sequentially~$d$-lower semicontinuous at all points.

\begin{definition}
  A subset~$C \subset X$ is said to be 
  \begin{itemize}[label=$\circ$, leftmargin=*]
    \item 
      \emph{right sequentially~$d$-closed} if the limit of any right~$d$-convergent sequence in~$C$ is again in~$C$,
    \item 
      \emph{right sequentially~$d$-compact} if every sequence~$(x_{i})$ in~$C$ has a subsequence that is right~$d$-convergent to some point in~$C$.
  \end{itemize}
\end{definition}

The following lemma is classical in metric spaces.
Eventhough the proof in this setting is similar, we include it for the sake of completeness.
\begin{lemma}
  Suppose that~$d$ is a distance function on~$X$ and~$f : X \to \R \cup \{+\infty\}$ be a right sequentially~$d$-lower semicontinuous.
  Then the sublevel set
  \begin{align*}
    S_{f}(\lambda) := \{ x \in X \mid f(x) \leq \lambda \}
  \end{align*}
  is right sequentially~$d$-closed for all~$\lambda \in \R$.
\end{lemma}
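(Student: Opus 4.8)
The plan is to unwind the definition of right sequential $d$-closedness and then invoke the lower semicontinuity hypothesis directly; no machinery beyond the definitions is needed. First I would fix $\lambda \in \R$ and let $(x_{i})$ be an arbitrary sequence contained in $S_{f}(\lambda)$ that is right $d$-convergent to some point~$\bar{x} \in X$. By the definition of right sequential $d$-closedness, it suffices to show that $\bar{x} \in S_{f}(\lambda)$, that is, $f(\bar{x}) \leq \lambda$.

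Next, since $x_{i} \in S_{f}(\lambda)$ for every~$i$, we have $f(x_{i}) \leq \lambda$ for all~$i$, and hence $\liminf_{i \to \infty} f(x_{i}) \leq \lambda$. On the other hand, because $f$ is right sequentially $d$-lower semicontinuous at~$\bar{x}$ and $(x_{i})$ is right $d$-convergent to~$\bar{x}$, the defining inequality of lower semicontinuity yields $f(\bar{x}) \leq \liminf_{i \to \infty} f(x_{i})$. Combining these two inequalities gives $f(\bar{x}) \leq \lambda$, so $\bar{x} \in S_{f}(\lambda)$, which is precisely the assertion. Since $\lambda$ and the sequence were arbitrary, $S_{f}(\lambda)$ is right sequentially $d$-closed for all~$\lambda \in \R$.

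I do not expect a genuine obstacle here: this is the verbatim metric-space argument, and the only points requiring a little care are bookkeeping ones. One must make sure that the convergence notion appearing in the definition of right sequential $d$-closedness is the same ``right'' notion used in the definition of right sequential $d$-lower semicontinuity (which is the case), so that the hypothesis applies to the sequence at hand. The degenerate situations are harmless: if $S_{f}(\lambda) = \emptyset$ the statement is vacuous, and the possibility $f \equiv +\infty$ along a tail of $(x_{i})$ cannot occur since $x_{i} \in S_{f}(\lambda)$ forces $f(x_{i}) \leq \lambda < +\infty$.
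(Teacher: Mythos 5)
Your proposal is correct and follows exactly the same argument as the paper: take a right $d$-convergent sequence in $S_{f}(\lambda)$, bound $\liminf_{i\to\infty} f(x_{i})$ above by $\lambda$, and apply the lower semicontinuity inequality at the limit to conclude $f(\bar{x}) \leq \lambda$. No differences worth noting.
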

\begin{proof}
  Suppose that~$\lambda \in \R$ and~$S_{f}(\lambda) \neq \emptyset$.
  Let~$(x_{i})$ be a sequence in~$S_{f}(\lambda)$ which is right~$d$-convergent to a point~$x \in X$.
  This implies
  \begin{align*}
    \lambda \geq \liminf_{i \to \infty} f(x_{i}) \geq f(x)
  \end{align*}
  and hence~$x \in S_{f}(\lambda)$.
  Therefore~$S_{f}(\lambda)$ is right sequentially~$d$-closed.
\end{proof}

We also need the next lemma which is a generalzation of the Cantor's intersection theorem.
To prove this, we first require the definition of balls.
The \emph{right~$d$-ball} about the point~$x \in X$ with radius~$r > 0$ is defined by
\begin{align*}
  B(x,r) := \{ y \in X \mid d(y,x) < r \}.
\end{align*}

\begin{lemma}\label{lem: Cantor intersection theorem}
  Let~$d$ be a right complete, right Hausdorff distance function on~$X$
  and let~$(S_{i})_{i=0}^{\infty}$ be a decreasing sequence of nonempty right sequentially~$d$-closed subsets of~$X$.
  Suppose that for each~$i \in \N\cup\{0\}$, there exist~$x_{i} \in S_{i}$ and~$r_{i} > 0$ in which~$S_{i} \subset B(x_{i},r_{i})$.
  If~$r_{i} \to 0$, then the sequence~$(x_{i})$ is right~$d$-convergent to a point~$x^{\ast}$ and~$\bigcap_{i=0}^{\infty} S_{i} = \{x^{\ast}\}$.
\end{lemma}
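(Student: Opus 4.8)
The plan is to run the classical Cantor scheme, adapted to the right-hand notions of convergence: first show that the selected sequence $(x_{i})$ is right $d$-Cauchy, then use right completeness to produce a candidate limit $x^{\ast}$, then use right sequential $d$-closedness of the $S_{i}$ to place $x^{\ast}$ in the intersection, and finally use the right Hausdorff property to rule out any other point of the intersection.

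First I would extract the Cauchy estimate from the hypotheses. Fix $i$ and let $j \geq i$. Since $(S_{i})$ is decreasing, $x_{j} \in S_{j} \subseteq S_{i} \subseteq B(x_{i},r_{i})$, and by the definition of the right $d$-ball this means $d(x_{j},x_{i}) < r_{i}$. Hence $\sup_{j \geq i} d(x_{j},x_{i}) \leq r_{i}$, and since $r_{i} \to 0$ we conclude that $(x_{i})$ is right $d$-Cauchy. Right completeness then gives a point $x^{\ast} \in X$ with $d(x^{\ast},x_{i}) \to 0$, i.e. $(x_{i})$ is right $d$-convergent to $x^{\ast}$.

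Next I would check that $x^{\ast}$ belongs to every $S_{i}$. For fixed $i$, the tail $(x_{j})_{j \geq i}$ is, by monotonicity of $(S_{i})$, a sequence lying in $S_{i}$ that is right $d$-convergent to $x^{\ast}$; since $S_{i}$ is right sequentially $d$-closed, $x^{\ast} \in S_{i}$. As $i$ is arbitrary, $x^{\ast} \in \bigcap_{i=0}^{\infty} S_{i}$, so the intersection is nonempty. For the reverse inclusion, take any $y \in \bigcap_{i=0}^{\infty} S_{i}$. Then $y \in S_{i} \subseteq B(x_{i},r_{i})$ for every $i$, so $d(y,x_{i}) < r_{i} \to 0$, which says precisely that $(x_{i})$ is right $d$-convergent to $y$ as well. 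Since $d$ is right Hausdorff, the right $d$-limit of $(x_{i})$ is unique, forcing $y = x^{\ast}$; therefore $\bigcap_{i=0}^{\infty} S_{i} = \{x^{\ast}\}$.

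There is no serious obstacle here — the scheme is standard — but the one point requiring care is the asymmetry: the ball $B(x_{i},r_{i})$ is defined through $d(\cdot,x_{i})$, so it is compatible with the \emph{right} versions of Cauchyness, convergence, closedness, and the Hausdorff property, and using any left-hand variant would break the chain of implications. Note also that the separate lower semicontinuity assumption on $d$ plays no role in this particular lemma.
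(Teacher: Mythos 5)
Your proposal is correct and follows essentially the same route as the paper's proof: the estimate $d(x_{j},x_{i}) \leq r_{i}$ for $j \geq i$ gives right $d$-Cauchyness, right completeness produces $x^{\ast}$, closedness of each $S_{i}$ places $x^{\ast}$ in the intersection, and the Hausdorff property forces any other point of the intersection to coincide with $x^{\ast}$. Your closing remarks on the compatibility of the right-hand notions and the non-use of the lower semicontinuity hypothesis are accurate.
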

\begin{proof}
  Due to the construction, it holds~$d(x_{j}, x_{i}) \leq r_{i}$ for any~$i \in \N$ and any~$j \geq i$. 
  We then have~$\lim_{i \to \infty} \sup_{j \geq i} d(x_{j},x_{i}) = 0$, where the completeness of~$X$ implies that~$(x_{i})$ is right~$d$-convergent to a unique point~$x^{\ast} \in X$.
  Since~$x_{j} \in S_{i}$ for all~$j \geq i$ and~$S_{i}$ is closed for all~$i\in \N$, it holds that the limit~$x^{\ast}$ belongs to all~$S_{i}$'s.
  This shows that~$x^{\ast} \in \bigcap_{i=0}^{\infty} S_{i}$.
  
  Next, we prove that~$\bigcap_{i=0}^{\infty} S_{i}$ contains only~$x^{\ast}$.
  Suppose that~$y^{\ast} \in \bigcap_{i=0}^{\infty} S_{i}$.
  Subsequently we have~$y^{\ast} \in S_{i} \subset B(x_{i},r_{i})$ for every~$i \in \N$.
  Passing to the limit, we obtain~$\lim_{i \to \infty} d(y^{\ast},x_{i}) = 0$ and the uniqueness of a limit implies that~$y^{\ast} = x^{\ast}$.
  We hence conclude that~$\bigcap_{i=0}^{\infty} S_{i} = \{x^{\ast}\}$.
\end{proof}

\section{Borwein-Preiss variational principle}

In this section, we state and prove the Borwein-Preiss variational principle using non-triangular distance function.
The proof of the main theorem, shown below, will also be the backbone of that of the next section.

\begin{theorem}[Borwein-Preiss variational principle]\label{thm: bpvp}
  Let~$d$ be a right complete, right Hausdorff, and separately right sequentially~$d$-lower semicontinuous distance function on~$X$, and let~$f : X \to \R \cup \{+\infty\}$ be a right sequentially~$d$-lower semicontinuous function that is bounded from below.
  Suppose that~$\varepsilon > 0$ and~$z_{0} \in X$ satisfy
  \[
    f(z_{0}) < \inf_{X} f + \varepsilon.
  \]
  Let~$(\delta_{i})_{i=0}^{\infty}$ be a sequence of positive reals.
  Then, there exist~$\bar{z} \in X$ and a sequence~$(z_{i})_{i=0}^{\infty}$ in~$X$ that is right~$d$-convergent to~$\bar{z}$ such that
  \begin{enumerate}[label=\upshape(\alph*), leftmargin=*]
    \item\label{ccs: bpvp1}
      $d(\bar{z},z_{i}) \leq \frac{\varepsilon}{2^{i}\delta_{0}}$ for all~$i \in \N$,
    \item\label{ccs: bpvp2} 
      $f(\bar{z}) + \sum_{k=0}^{\infty} \delta_{k}d(\bar{z},z_{k}) \leq f(z_{0})$,
      \item\label{ccs: bpvp3} 
      $f(z) + \sum_{k=0}^{\infty} \delta_{k} d(z,z_{k}) > f(\bar{z}) + \sum_{k=0}^{\infty} \delta_{k} d(\bar{z},z_{k})$, for all~$z \in X \setminus \{\bar{z}\}$.
  \end{enumerate}
\end{theorem}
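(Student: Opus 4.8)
The plan is to run the classical Borwein--Preiss / Li--Shi iteration, but with the metric-space ingredients replaced by their distance-function counterparts from Section~2, taking care that every estimate uses only $d\geq 0$, $d(x,x)=0$, the separate right sequential $d$-lower semicontinuity of $d$, and Lemma~\ref{lem: Cantor intersection theorem}; the triangle inequality and symmetry are never invoked.

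\emph{Construction.} Put $\varphi_n(x):=f(x)+\sum_{k=0}^{n}\delta_k d(x,z_k)$. Beginning with the given $z_0$, set $a_0:=f(z_0)$ and $S_0:=\{x\in X: \varphi_0(x)\leq a_0\}$; note $z_0\in S_0$. Inductively, suppose $S_i$ is nonempty and right sequentially $d$-closed with $z_i\in S_i$. Since $\varphi_i\geq\inf_X f>-\infty$, we may pick $z_{i+1}\in S_i$ with $\varphi_i(z_{i+1})\leq \inf_{S_i}\varphi_i+\dfrac{\varepsilon\,\delta_{i+1}}{2^{i+1}\delta_0}$; set $a_{i+1}:=\varphi_i(z_{i+1})$ and $S_{i+1}:=\{x\in S_i:\varphi_{i+1}(x)\leq a_{i+1}\}$. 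Then $z_{i+1}\in S_{i+1}$, because the new summand $\delta_{i+1}d(z_{i+1},z_{i+1})$ vanishes and hence $\varphi_{i+1}(z_{i+1})=a_{i+1}$; also $S_{i+1}\subseteq S_i$; and $S_{i+1}$ is right sequentially $d$-closed, since $\varphi_{i+1}$ is a finite sum of $f$ with the maps $d(\cdot,z_k)$, all right sequentially $d$-lsc by hypothesis, so $\varphi_{i+1}$ is right sequentially $d$-lsc and its sublevel set is right sequentially $d$-closed, which we intersect with $S_i$. The crucial point is that $(a_i)$ is non-increasing: $a_{i+1}=\varphi_i(z_{i+1})\leq a_i$ simply because $z_{i+1}\in S_i$ and the defining condition of $S_i$ caps $\varphi_i$ by $a_i$.

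\emph{Shrinking balls and the limit point.} From the near-minimality, for $x\in S_{i+1}$ we get $\delta_{i+1}d(x,z_{i+1})=\varphi_{i+1}(x)-\varphi_i(x)\leq a_{i+1}-\inf_{S_i}\varphi_i\leq \varepsilon\delta_{i+1}/(2^{i+1}\delta_0)$, that is $d(x,z_{i+1})\leq \varepsilon/(2^{i+1}\delta_0)$; likewise $d(x,z_0)<\varepsilon/\delta_0$ on $S_0$ using $f\geq\inf_X f$ and $f(z_0)<\inf_X f+\varepsilon$. Hence each $S_i$ sits inside a right $d$-ball about $z_i$ of radius tending to $0$, and Lemma~\ref{lem: Cantor intersection theorem} produces $\bar z$ with $z_i\to\bar z$ (right $d$-convergence) and $\bigcap_i S_i=\{\bar z\}$. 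Then (a) is immediate since $\bar z\in S_i$ lies in that ball. For (b): $\bar z\in S_n$ gives $\varphi_n(\bar z)\leq a_n\leq a_0=f(z_0)$ for every $n$, and letting $n\to\infty$ (the partial sums increasing) yields $f(\bar z)+\sum_{k}\delta_k d(\bar z,z_k)\leq f(z_0)$; in particular the series converges since $f(\bar z)>-\infty$. For (c), suppose some $z\neq\bar z$ had $f(z)+\sum_k\delta_k d(z,z_k)\leq f(\bar z)+\sum_k\delta_k d(\bar z,z_k)$. Since $\varphi_m(\bar z)\leq a_n$ for all $m,n$ (split into $n\geq m$ and $n<m$, using the monotonicity of $(a_n)$), the right-hand side equals $\sup_m\varphi_m(\bar z)\leq a_n$ for every $n$; therefore $\varphi_n(z)\leq f(z)+\sum_k\delta_k d(z,z_k)\leq a_n$ for all $n$, which by induction (starting from $\varphi_0(z)\leq a_0$) forces $z\in S_n$ for all $n$, so $z\in\bigcap_i S_i=\{\bar z\}$ --- a contradiction.

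\emph{Main obstacle.} The delicate point is getting the bookkeeping of the construction exactly right. The level cap defining $S_{i+1}$ must be $a_{i+1}=\varphi_i(z_{i+1})$ --- \emph{not} $\inf_{S_i}\varphi_i$ plus the slack --- so that $(a_i)$ telescopes down to $f(z_0)$ with no accumulated error, which is precisely what makes the bound in (b) clean; meanwhile the slack in the choice of $z_{i+1}$ must be tuned to $\varepsilon\delta_{i+1}/(2^{i+1}\delta_0)$ so that the single new perturbation term $\delta_{i+1}d(\cdot,z_{i+1})$ already squeezes $S_{i+1}$ into a ball of radius $\varepsilon/(2^{i+1}\delta_0)$. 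Once those two choices are in place, the remaining work is routine verification that the finitely perturbed functions are right sequentially $d$-lsc, plus one application of Lemma~\ref{lem: Cantor intersection theorem}; no property of $d$ beyond $d\geq 0$, $d(x,x)=0$, and the stated lower semicontinuity is used, which is why symmetry and the triangle inequality are dispensable.
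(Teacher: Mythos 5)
Your proposal is correct and follows essentially the same route as the paper: the same inductive construction of near-minimizers $z_{i+1}$ of $\varphi_i$ over $S_i$ with slack $\varepsilon\delta_{i+1}/(2^{i+1}\delta_0)$, the same nested sublevel sets capped at $a_{i+1}=\varphi_i(z_{i+1})$, the same ball estimate feeding into Lemma~\ref{lem: Cantor intersection theorem}, and the same telescoping for (b). Your treatment of (c) by contradiction (showing a violating $z$ would lie in every $S_n$) is a slightly more careful packaging of the paper's limit argument, since it sidesteps the issue of strict inequalities degenerating under $j\to\infty$, but it is the same underlying idea.
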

\begin{proof}
  We begin by constructing the sequence~$(z_{i})$.
  First, we set
  \[
    S_{0} := \{ z \in X \mid f(z) + \delta_{0} d(z,z_{0}) \leq f(z_{0}) \}.
  \]
  Observe that~$z_{0} \in S_{0}$ and since~$f$ and~$d(\cdot,z_{0})$ are right sequentially~$d$-lower semicontinuous, the set~$S_{0}$ is right sequentially~$d$-closed.
  Next, take~$z_{1} \in S_{0}$ such that
  \[
    f(z_{1}) + \delta_{0} d(z_{1},z_{0}) \leq \inf_{z \in S_{0}} [f(z) + \delta_{0} d(z,z_{0})] + \frac{\varepsilon \delta_{1}}{2\delta_{0}}
  \]
  and define
  \[
    S_{1} := \{ z \in S_{0} \mid f(z) + \delta_{0} d(z,z_{0}) + \delta_{1} d(z,z_{1}) \leq f(z_{1}) + \delta_{0} d(z_{1},z_{0}) \}.
  \]
  Again, we have~$z_{1} \in S_{1}$ and~$S_{1}$ is right sequentially~$d$-closed.
  Next, suppose that we have already defined~$z_{j} \in X$ and~$S_{j} \subset S_{j-1}$ for~$j = 1,2,\dots,i-1$ in such a way that
  \[
    f(z_{j}) + \sum_{k=0}^{j-1} \delta_{k} d(z_{j},z_{k}) \leq \inf_{z \in S_{j-1}} \left[f(z) + \sum_{k=0}^{j-1} \delta_{k} d(z,z_{k})\right] + \frac{\varepsilon \delta_{j}}{2^{j}\delta_{0}}
  \]
  and
  \[
    S_{j} := \left\{ z \in S_{j-1} \mid f(z) + \sum_{k=0}^{j} \delta_{k} d(z,z_{k}) \leq f(z_{j}) + \sum_{k=0}^{j-1} \delta_{k} d(z_{j},z_{k}) \right\}.
  \]
  We proceed by choosing~$z_{i} \in S_{i-1}$ satisfying
  \[
    f(z_{i}) + \sum_{k=0}^{i-1} \delta_{k} d(z_{i},z_{k}) \leq \inf_{z \in S_{i-1}} \left[ f(z) + \sum_{k=0}^{i-1} \delta_{k} d(z,z_{k}) \right] + \frac{\varepsilon \delta_{i}}{2^{i}\delta_{0}}
  \]
  and subsequently put
  \[
    S_{i} := \left\{ z \in S_{i-1} \mid f(z) + \sum_{k=0}^{i} \delta_{k} d(z,z_{k}) \leq f(z_{i}) + \sum_{k=0}^{i-1} \delta_{k} d(z_{i},z_{k}) \right\}.
  \]
  This inductive process defines a sequence~$(z_{i})$ in~$X$ and a decreasing sequence~$(S_{i})$ of right sequentially~$d$-closed subsets of~$X$, and that~$z_{i} \in S_{i}$ for all~$i \in \N\cup\{0\}$.

  Now, we are going to show~\ref{ccs: bpvp1}.
  Note that for any~$z \in S_{i}$, it holds that
  \begin{align*}
    \delta_{i} d(z,z_{i}) 
      &\leq \left[ f(z_{i}) + \sum_{k=0}^{i-1} \delta_{k}d(z_{i},z_{k}) \right] - \left[ f(z) + \sum_{k=0}^{i-1} \delta_{k}d(z,z_{k}) \right] \\
      &\leq \left[ f(z_{i}) + \sum_{k=0}^{i-1} \delta_{k}d(z_{i},z_{k}) \right] - \inf_{z \in S_{i-1}}\left[ f(z) + \sum_{k=0}^{i-1} \delta_{k}d(z,z_{k}) \right] \\
      &\leq \frac{\varepsilon \delta_{i}}{2^{i}\delta_{0}}.
  \end{align*}
  Hence we have
  \begin{align}\label{eqn: sequence estimation}
    d(z,z_{i}) \leq \frac{\varepsilon}{2^{i}\delta_{0}}
  \end{align}
  for any~$z \in S_{i}$ and any~$i \in \N \cup \{0\}$.
  In other words, we have~$S_{i} \subset B(z_{i},r_{i})$ with~$r_{i} := \frac{\varepsilon}{2^{i}\delta_{0}}$ for all~$i \in \N \cup \{0\}$.
  Lemma~\ref{lem: Cantor intersection theorem} shows that~$(z_{i})$ is right~$d$-convergent to a unique point~$\bar{z}$ and~$\bigcap_{i=0}^{\infty} S_{i} = \{\bar{z}\}$.
  In view of~\eqref{eqn: sequence estimation}, the conclusion~\ref{ccs: bpvp1} is proved.

  Next we prove~\ref{ccs: bpvp2}. 
  One may observe for any~$q \geq j$ that
  \begin{align*}
    f(z_{0})
      &\geq f(z_{j}) + \sum_{k=0}^{j-1} \delta_{k} d(z_{j},z_{k}) \\
      &\geq f(z_{q}) + \sum_{k=0}^{q-1} \delta_{k} d(z_{q},z_{k}) \\
      &\geq f(\bar{z}) + \sum_{k=0}^{q} \delta_{k} d(\bar{z},z_{k}).
  \end{align*}
  The desired assertion follows by passing~$q \to \infty$ in the above inequalities.

  We finally prove~\ref{ccs: bpvp3}.
  Suppose that~$z \in X\setminus\{\bar{z}\}$.
  This means~$z \not\in \bigcap_{i=0}^{\infty} S_{i}$, and hence the inequalities
  \begin{align*}
    f(z) + \sum_{k=0}^{j} \delta_{k} d(z,z_{k}) 
    &> f(z_{j}) + \sum_{k=0}^{j-1} \delta_{k} d(z_{j},z_{k}) \\
    &\geq f(\bar{z}) + \sum_{k=0}^{j} \delta_{k} d(\bar{z},z_{k})
  \end{align*}
  hold for sufficiently large~$j \in \N \cup \{0\}$.
  Letting~$j \to \infty$ in the above inequalities, the conclusion~\ref{ccs: bpvp3} is verified.
\end{proof}

We could also deduce a simpler statement of the above theorem, which is known in the literature as the \emph{weak form} of the Borwein-Preiss variational principle.
\begin{theorem}[Borwein-Preiss variational principle: Weak form]\label{thm: bpvp weak form}
  Let~$d$ be a right complete, right Hausdorff, and separately right sequentially~$d$-lower semicontinuous distance function on~$X$, and let~$f : X \to \R \cup \{+\infty\}$ be a right sequentially~$d$-lower semicontinuous function that is bounded from below.
  For any sequence~$(\delta_{i})_{i=0}^{\infty}$ of positive reals, there exists a sequence~$(z_{i})_{i=0}^{\infty}$ in~$X$ that is right~$d$-convergent to~$\bar{z} \in X$ such that the perturbed function
  \begin{align*}
    f(\cdot) + \sum_{k=0}^{\infty} \delta_{k} d(\cdot,z_{k}) : X \to \R \cup \{+\infty\}
  \end{align*}
  has a unique minimizer~$\bar{z}$.
\end{theorem}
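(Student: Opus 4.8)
The plan is to obtain the weak form as an immediate corollary of Theorem~\ref{thm: bpvp}. First I would dispose of the degenerate case: if $f \equiv +\infty$ the statement is vacuous (or one simply excludes it by assuming $f$ is proper), so I may fix a point $z_{0} \in X$ with $f(z_{0}) \in \R$. Since $f$ is bounded from below, $\inf_{X} f \in \R$, and hence I can choose any positive real strictly larger than $f(z_{0}) - \inf_{X} f$, say
\[
  \varepsilon := f(z_{0}) - \inf_{X} f + 1 > 0,
\]
which guarantees the hypothesis $f(z_{0}) < \inf_{X} f + \varepsilon$ of Theorem~\ref{thm: bpvp}.

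Next I would invoke Theorem~\ref{thm: bpvp} with this $\varepsilon$, this $z_{0}$, and the prescribed sequence $(\delta_{i})_{i=0}^{\infty}$. It yields a point $\bar{z} \in X$ and a sequence $(z_{i})_{i=0}^{\infty}$ in~$X$ that is right~$d$-convergent to~$\bar{z}$ and satisfies conclusions \ref{ccs: bpvp1}--\ref{ccs: bpvp3}. The convergence statement is exactly what the weak form asks for (conclusion \ref{ccs: bpvp1} even gives more). To obtain the uniqueness-of-minimizer claim, I would read off conclusion \ref{ccs: bpvp3}: writing $g := f(\cdot) + \sum_{k=0}^{\infty} \delta_{k} d(\cdot,z_{k})$, we have $g(z) > g(\bar{z})$ for every $z \in X \setminus \{\bar{z}\}$, so $\bar{z}$ minimizes $g$ and is the only minimizer. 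Moreover, combining conclusion \ref{ccs: bpvp2} with $f(z_{0}) < \infty$ shows $g(\bar{z}) \le f(z_{0}) < \infty$, so $\bar{z} \in \dom g$ and the minimum value is finite.

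Since everything reduces to bookkeeping on top of Theorem~\ref{thm: bpvp}, there is no genuine obstacle here. The only points deserving a word of care are the initial reduction — ensuring $\dom f \neq \emptyset$ so that an admissible pair $(\varepsilon, z_{0})$ exists — and the observation that the strict inequality in \ref{ccs: bpvp3} holding against \emph{all} $z \neq \bar{z}$ is precisely the assertion that $\bar{z}$ is the unique minimizer of the perturbed function.
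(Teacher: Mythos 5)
Your proposal is correct and is exactly the deduction the paper intends: the weak form is stated as an immediate corollary of Theorem~\ref{thm: bpvp}, obtained by fixing a proper point $z_{0}$, choosing $\varepsilon > f(z_{0}) - \inf_{X} f$, and reading the unique-minimizer claim off conclusion~\ref{ccs: bpvp3}. Your remarks on properness and finiteness of the minimum are sensible bookkeeping but do not change the argument.
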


\section{Ekeland variational principle}

In this section, we state and prove the Ekeland variational principle where the kernel function is a non-triangular distance.
A big difference, and an obstacle, of deducing the Ekeland variational principle from the Borwein-Preiss theorem is the positivity of the sequence~$(\delta_{i})$.
Although there were some refined versions (see, for example \cite{zbMATH01496358, zbMATH06522706}), we would like to keep the presentation simple and to focus merely on deducing the Ekeland variational principle.
The proof exploits a similar technique that is used in showing Theorem~\ref{thm: bpvp weak form}, but with a slight adaptation.
Also, one should note that the following theorem requires a stronger continuity assumption on the distance function~$d$.

\begin{theorem}[Ekeland variational principle]\label{thm: evp}
  Let~$d$ be a distance function on~$X$ which is right complete, right Hausdorff, right sequentially~$d$-lower semicontinuous in the first argument, and right sequentially~$d$-continuous in the second argument.
  Let~$f : X \to \R \cup \{+\infty\}$ be a right sequentially~$d$-lower semicontinuous function that is bounded from below.
  Suppose that~$\varepsilon > 0$ and~$z_{0} \in X$ satisfy
  \begin{align*}
    f(z_{0}) < \inf_{X} f + \varepsilon.
  \end{align*}
  Then, there exists~$\bar{z} \in X$ such that
  \begin{enumerate}[label=\upshape(\alph*), leftmargin=*]
    \item\label{ccs: evp1}
      $d(\bar{z},z_{0}) \leq 1$
    \item\label{ccs: evp2} 
      $f(\bar{z}) + \varepsilon d(\bar{z},z_{0}) \leq f(z_{0})$
      \item\label{ccs: evp3} 
        $f(z) + \varepsilon d(z,\bar{z}) \geq f(\bar{z})$ for all~$z \in X$.
  \end{enumerate}
\end{theorem}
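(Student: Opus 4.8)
The plan is to mimic the proof of Theorem~\ref{thm: bpvp weak form} but to work with a \emph{single} perturbation kernel rather than an infinite sum, which circumvents the obstruction that the $(\delta_i)$ must be positive. Concretely, I would take the constant sequence $\delta_i := \varepsilon$ for all $i$ and build a nested sequence of sublevel-type sets, but where at stage $i$ the perturbation is measured only against the \emph{current} base point $z_i$, not against all previous ones. That is, set $z_0$ as given, and inductively, having chosen $z_i$, define
\begin{align*}
  S_i := \{ z \in X \mid f(z) + \varepsilon\, d(z,z_i) \leq f(z_i) \},
\end{align*}
then pick $z_{i+1} \in S_i$ with $f(z_{i+1}) + \varepsilon\, d(z_{i+1},z_i) \leq \inf_{z \in S_i}[f(z) + \varepsilon\, d(z,z_i)] + \varepsilon 2^{-(i+1)}$. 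Note $z_i \in S_i$, so the $S_i$ are nonempty, and by the Lemma after the sublevel-set Lemma (right sequential $d$-lower semicontinuity of $f$ in the function slot and of $d(\cdot,z_i)$ in the first argument) each $S_i$ is right sequentially $d$-closed. To see $(S_i)$ is decreasing, one checks that $z \in S_{i+1}$ forces $f(z) + \varepsilon d(z,z_{i+1}) \leq f(z_{i+1}) \leq f(z_i) - \varepsilon d(z_{i+1},z_i)$, and I would need a way to conclude $f(z) + \varepsilon d(z,z_i) \leq f(z_i)$, i.e. $d(z,z_{i+1}) + d(z_{i+1},z_i) \geq d(z,z_i)$ — but \emph{there is no triangle inequality}. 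So instead I should define $S_{i+1} \subset S_i$ explicitly as a subset of $S_i$, exactly as in the Borwein--Preiss proof: $S_{i+1} := \{ z \in S_i \mid f(z) + \varepsilon d(z,z_{i+1}) \leq f(z_{i+1}) \}$, so nesting is automatic and closedness still follows.

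Next I would establish the radius estimate that feeds Lemma~\ref{lem: Cantor intersection theorem}. For $z \in S_i$ we get $\varepsilon\, d(z,z_i) \leq f(z_i) - f(z) \leq f(z_i) - \inf_{z' \in S_{i-1}}[f(z') + \varepsilon d(z',z_{i-1})] \leq \varepsilon 2^{-i}$ (using that $z_i$ was an approximate minimizer over $S_{i-1}$ and that $f(z) \geq \inf_X f$ is bounded below — actually the cleaner bound is $f(z_i) - f(z) \le f(z_i)+\varepsilon d(z_i,z_{i-1}) - \inf_{S_{i-1}}[\cdots] \le \varepsilon 2^{-i}$ since $z\in S_i\subset S_{i-1}$ and $f(z) \ge \inf_{S_{i-1}}[f(\cdot)+\varepsilon d(\cdot,z_{i-1})] - \varepsilon d(z,z_{i-1})$ — I will need to be careful here, but the key point is that the approximate-minimality at each stage bounds the diameter). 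Thus $S_i \subset B(z_i, 2^{-i})$, the radii go to $0$, and Lemma~\ref{lem: Cantor intersection theorem} yields a point $\bar z$ with $z_i \to \bar z$ right $d$-convergently and $\bigcap_i S_i = \{\bar z\}$. Summing the chain of inequalities $f(z_{i+1}) + \varepsilon d(z_{i+1},z_i) \le f(z_i)$ telescopes to give $\varepsilon \sum_{i} d(z_{i+1},z_i) \le f(z_0) - \inf_X f$, which together with $d(\bar z, z_0) \le r_0 = \varepsilon/(\varepsilon) \cdot$ (the $i=0$ radius, chosen so it equals $1$ — I would scale the $i=0$ step so that $S_0 \subset B(z_0,1)$, e.g. by using $f(z_0) < \inf_X f + \varepsilon$ directly) yields~\ref{ccs: evp1} and~\ref{ccs: evp2}.

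For~\ref{ccs: evp2} more carefully: since $\bar z \in S_0$, $f(\bar z) + \varepsilon d(\bar z, z_0) \le f(z_0)$, which is exactly the claim; and $\varepsilon d(\bar z, z_0) \le f(z_0) - \inf_X f < \varepsilon$ gives $d(\bar z, z_0) < 1$, hence~\ref{ccs: evp1}. The real content is~\ref{ccs: evp3}, and here is where the \emph{stronger} continuity hypothesis — right sequential $d$-continuity in the \emph{second} argument — is used. Suppose $z \ne \bar z$. Since $\bar z \in S_i$ for every $i$, I claim $f(z) + \varepsilon d(z, \bar z) \ge f(\bar z)$. If not, then $f(z) + \varepsilon d(z,\bar z) < f(\bar z)$; I want to derive that $z \in S_i$ for all large $i$, contradicting $\bigcap S_i = \{\bar z\}$. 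Because $z_i \to \bar z$ and $d(z,\cdot)$ is right sequentially $d$-continuous at $\bar z$, $\varepsilon d(z,z_i) \to \varepsilon d(z,\bar z)$; also $f(\bar z) \le f(z_i)$ (the sequence $f(z_i)$ is nonincreasing and $\ge$ its limit which is $\ge f(\bar z)$ by lower semicontinuity — in fact $f(z_i) \ge f(\bar z)$ from $\bar z \in S_i$). So $f(z) + \varepsilon d(z,z_i) \to f(z) + \varepsilon d(z,\bar z) < f(\bar z) \le \lim_i f(z_i)$, hence for large $i$, $f(z) + \varepsilon d(z,z_i) \le f(z_i)$, i.e. $z \in S_i$, and since the $S_i$ are nested this gives $z \in S_j$ for all $j \le i$, but we need $z \in S_j$ for all $j$; the nesting goes the wrong way. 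The fix: $z \in S_i$ for \emph{arbitrarily large} $i$ already contradicts $\bigcap S_i = \{\bar z\}$ provided I instead show $z \in S_i$ for a \emph{single} large enough $i$ implies $z \in S_i$ and hence, combined with the diameter bound $S_i \subset B(z_i, 2^{-i})$ and $z_i \to \bar z$, forces $d(z,\bar z)$ small — more precisely $z \in S_i$ gives $d(z,z_i) \le 2^{-i} \to 0$, so $z$ is right $d$-convergent to $\bar z$, and if also $z$ were a right $d$-limit... hmm, I actually need right Hausdorff plus the observation that $z \in S_i$ for all large $i$ means $d(z,z_i)\to 0$, and since $z_i \to \bar z$ too, a Hausdorff-type argument pins $z = \bar z$. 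I expect \textbf{this last step — extracting the contradiction in~\ref{ccs: evp3} without a triangle inequality, using only right $d$-continuity in the second slot and the right Hausdorff property} — to be the main obstacle, and the delicate point will be choosing the set definitions $S_i$ so that membership of $z$ in cofinally many $S_i$ genuinely collapses $z$ onto $\bar z$.
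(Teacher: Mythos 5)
Your overall architecture matches the paper's: nested, right sequentially $d$-closed sets $S_i$ built from approximate minimizers, a radius bound feeding Lemma~\ref{lem: Cantor intersection theorem}, and \ref{ccs: evp1}--\ref{ccs: evp2} read off from $\bar z\in S_0$ together with $f(z_0)<\inf_X f+\varepsilon$. Your insistence on forcing the nesting by defining $S_{i+1}$ explicitly as a subset of $S_i$, rather than hoping a triangle inequality supplies it, is well taken. However, your choice of $z_{i+1}$ as an approximate minimizer of $f(\cdot)+\varepsilon d(\cdot,z_i)$ breaks the diameter estimate: for $z\in S_{i}\subset S_{i-1}$ one only has $f(z)\ge \inf_{z'\in S_{i-1}}[f(z')+\varepsilon d(z',z_{i-1})]-\varepsilon d(z,z_{i-1})$, so your chain yields $\varepsilon d(z,z_i)\le \varepsilon 2^{-i}+\varepsilon d(z,z_{i-1})$, and the radii need not tend to $0$; Lemma~\ref{lem: Cantor intersection theorem} then does not apply. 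The paper avoids this by choosing $z_i$ to nearly minimize $f$ \emph{alone} over $S_{i-1}$, which gives $\varepsilon d(z,z_i)\le f(z_i)-f(z)\le f(z_i)-\inf_{S_{i-1}}f\le \varepsilon 2^{-i}$ in one line.

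The more serious gap is \ref{ccs: evp3}, which you leave open, and the contradiction route you sketch cannot close. Assuming $f(z)+\varepsilon d(z,\bar z)<f(\bar z)$ and passing to the limit in $f(z)+\varepsilon d(z,z_i)\le f(z_i)$, using $d(z,z_i)\to d(z,\bar z)$ and $f(z_i)\to f(\bar z)$, only returns $f(z)+\varepsilon d(z,\bar z)\le f(\bar z)$ — no contradiction; and your Hausdorff patch needs $d(z,z_i)\to 0$, which you cannot extract since $f(z_i)-f(z)\to f(\bar z)-f(z)$ need not vanish. The paper argues in the forward direction instead: for $z\ne\bar z$ one has $z\notin S_j$ for all sufficiently large $j$, the failure of the stage-$j$ inequality gives $f(z)+\varepsilon d(z,z_j)>f(z_j)\ge f(\bar z)+\varepsilon d(\bar z,z_j)$ (the second inequality because $\bar z\in S_j$), and letting $j\to\infty$ with $d(\bar z,z_j)\to 0$ and $d(z,z_j)\to d(z,\bar z)$ delivers \ref{ccs: evp3}; this is precisely where the right sequential $d$-continuity of $d$ in the second argument is consumed. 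Be aware that this forward route requires $z\notin S_j$ to be witnessed by the failure of the stage-$j$ inequality itself, not merely by $z\notin S_{j-1}$; with your (correctly) nested definition that is only automatic at the first index where $z$ drops out, so the nesting concern you raised at the outset resurfaces here — it is also the one point where the paper's own write-up, which defines each $S_j$ over all of $X$ and asserts the nesting, is loose.
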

\begin{proof}
  We first define
  \begin{align*}
    S_{0} := \{ z \in X \mid f(z) + \varepsilon d(z,z_{0}) \leq f(z_{0}) \}.
  \end{align*}
  Note that~$S_{0} \neq \emptyset$ and since~$f$ and~$d(\cdot,z_{0})$ are right sequentially~$d$-lower semicontinuous, the set~$S_{0}$ is right sequentially~$d$-closed.
  Choose~$z_{1} \in S_{0}$ such that
  \begin{align*}
    f(z_{1}) \leq \inf_{z \in S_{0}} f(z) + \frac{\delta_{0}}{2}.
  \end{align*}
  and define
  \begin{align*}
    S_{1} := \{ z \in X \mid f(z) + \varepsilon d(z,z_{1}) \leq f(z_{1}) \}.
  \end{align*}
  Suppose that we have already defined~$z_{j} \in S_{j-1}$ and~$S_{j} \subset S_{j-1}$ for~$j = 1,\dots,i-1$.
  We then choose~$z_{i} \in S_{i-1}$ satisfying
  \begin{align*}
    f(z_{i}) \leq \inf_{z \in S_{i-1}} f(z) + \frac{\varepsilon}{2^{i}}
  \end{align*}
  and set
  \begin{align*}
    S_{i} := \{ z \in X \mid f(z) + \varepsilon d(z,z_{i}) \leq f(z_{i}) \}.
  \end{align*}
  This inductive process defines a sequence~$(z_{i})$ in~$X$ and a decreasing sequence~$(S_{i})$ of right sequentially~$d$-closed subsets of~$X$ in such a way that~$z_{i} \in S_{i}$ for every~$i \in \N \cup \{0\}$.

  For any~$z \in S_{i}$, we may see that
  \begin{align}\label{eqn: css1 leading ineq}
    \varepsilon d(z,z_{i})
      &\leq f(z_{i}) - f(z) \leq f(z_{i}) - \inf_{z \in S_{i-1}} f(z) \leq \frac{\varepsilon}{2^{i}}.
  \end{align}
  This shows that~$S_{i} \subset B(z_{i},r_{i})$ with~$r_{i} := \frac{\varepsilon}{2^{i}}$, for all~$i \in \N \cup \{0\}$.
  Following from~Lemma~\ref{lem: Cantor intersection theorem}, the sequence~$(z_{i})$ is right~$d$-convergent to a unique point~$\bar{z} \in X$ and that~$\bigcap_{i=0}^{\infty} S_{i} = \{\bar{z}\}$.
  The inequalities~\eqref{eqn: css1 leading ineq} also guarantees~\ref{ccs: evp1} by letting~$i = 0$.
  The fact that~$\bar{z} \in S_{0}$ yields~\ref{ccs: evp2}.
  
  Finally, take any~$z \in X \setminus \{\bar{z}\}$.
  This means~$z \not\in \bigcap_{i=0}^{\infty} S_{i}$ and hence~$z \not\in S_{j}$ for all sufficiently large~$j \in \N$.
  For such~$j \in \N$, we have
  \begin{align*}
    f(z) + \varepsilon d(z,z_{j}) > f(z_{j}) \geq f(\bar{z}) + \varepsilon d(\bar{z},z_{j}).
  \end{align*}
  Since~$d$ is right sequentially~$d$-continuous in the second argument, letting~$j \to \infty$ gives~$f(z) + \varepsilon d(z,\bar{z}) \geq f(\bar{z})$ and the theorem is thus proved.
\end{proof}

Again, we present the weak form of the above theorem.
\begin{theorem}[Ekeland variational principle: Weak form]\label{thm: evp weak}
  Let~$d$ be a distance function on~$X$ which is right complete, right Hausdorff, right sequentially~$d$-continuous in the first argument, and right sequentially~$d$-lower semicontinuous in the second argument.
  Let~$f : X \to \R \cup \{+\infty\}$ be a right sequentially~$d$-lower semicontinuous function that is bounded from below.
  For any~$\varepsilon > 0$, there exists~$\bar{z} \in X$ such that the following perturbed function
  \begin{align*}
    f(\cdot) + \varepsilon d(\cdot,\bar{z}) : X \to \R\cup\{+\infty\}
  \end{align*}
  has a minimizer~$\bar{z}$.
\end{theorem}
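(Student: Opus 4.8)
The plan is to read the weak form straight off Theorem~\ref{thm: evp}: it is nothing but conclusion \ref{ccs: evp3} there, with \ref{ccs: evp1} and \ref{ccs: evp2} discarded. The one preparatory observation is that, since $f$ is bounded from below, $\inf_X f$ is a finite real, so for the prescribed $\varepsilon>0$ one may choose $z_0\in X$ with $f(z_0)<\inf_X f+\varepsilon$; this is immediate from the definition of the infimum (we tacitly take $\dom f\neq\emptyset$, the only nondegenerate case).

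Then I would apply Theorem~\ref{thm: evp} with this $\varepsilon$ and this $z_0$ to produce $\bar z\in X$ satisfying \ref{ccs: evp1}--\ref{ccs: evp3}. Conclusion \ref{ccs: evp3} reads $f(z)+\varepsilon d(z,\bar z)\geq f(\bar z)$ for all $z\in X$; since $d(\bar z,\bar z)=0$, the right-hand side is the value at $\bar z$ of the perturbed function $f(\cdot)+\varepsilon d(\cdot,\bar z)$, so this inequality says exactly that $\bar z$ minimizes $f(\cdot)+\varepsilon d(\cdot,\bar z)$ --- which is the desired assertion. The bounds \ref{ccs: evp1} and \ref{ccs: evp2} play no role and are simply dropped, which is why the weak statement needs no reference to $z_0$.

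Thus there is essentially no obstacle: the weak form is Theorem~\ref{thm: evp} with a truncated conclusion, exactly as Theorem~\ref{thm: bpvp weak form} is Theorem~\ref{thm: bpvp} with a truncated conclusion. Should one prefer a self-contained argument, the proof of Theorem~\ref{thm: evp} transplants verbatim: starting from such a $z_0$ it builds the nested, right sequentially $d$-closed sets $S_i$, locates $\bar z=\lim_i z_i$ by Lemma~\ref{lem: Cantor intersection theorem}, and concludes by letting $j\to\infty$ in $f(z)+\varepsilon d(z,z_j)>f(z_j)\geq f(\bar z)+\varepsilon d(\bar z,z_j)$, valid for large $j$ when $z\neq\bar z$. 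The only step where more than the closedness of the $S_i$ enters is this last passage to the limit, which rests on the sequential $d$-continuity of $d$; everything else is bookkeeping.
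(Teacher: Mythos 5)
Your approach is exactly the paper's: the paper gives no proof of Theorem~\ref{thm: evp weak}, presenting it as an immediate corollary of Theorem~\ref{thm: evp} obtained by choosing $z_0$ with $f(z_0)<\inf_X f+\varepsilon$ (possible since $f$ is bounded below and proper) and keeping only conclusion~\ref{ccs: evp3}, which, since $d(\bar z,\bar z)=0$, says precisely that $\bar z$ minimizes $f(\cdot)+\varepsilon d(\cdot,\bar z)$.

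One point you pass over silently: as printed, the continuity hypotheses of the two theorems are swapped. Theorem~\ref{thm: evp} assumes $d$ is right sequentially $d$-lower semicontinuous in the first argument and right sequentially $d$-continuous in the second, whereas Theorem~\ref{thm: evp weak} assumes continuity in the first argument and only lower semicontinuity in the second. Continuity in the first argument does imply the lower semicontinuity needed there, but lower semicontinuity in the second argument does not supply the continuity in the second argument that Theorem~\ref{thm: evp} requires --- and which your self-contained variant also needs at the final passage to the limit in $f(z)+\varepsilon d(z,z_j)>f(z_j)\ge f(\bar z)+\varepsilon d(\bar z,z_j)$: lower semicontinuity only yields $\liminf_j d(z,z_j)\ge d(z,\bar z)$, which is the wrong direction for concluding $f(z)+\varepsilon d(z,\bar z)\ge f(\bar z)$. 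This is almost certainly a typographical slip in the paper's statement rather than a flaw in your reasoning (the subsequent application to the Caristi theorem assumes joint continuity, so both readings are satisfied there), but a careful write-up should either align the hypotheses with those of Theorem~\ref{thm: evp} or state explicitly that the deduction requires continuity in the second argument.
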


\section{Caristi theorem}

In this section, we provide a useful application of the Ekeland variational principle to prove the Caristi fixed point theorem for a set-valued map.
The original Caristi theorem~\cite{zbMATH03477902} is known to be equivalent to the Ekeland variational principle in metric spaces~\cite{zbMATH03574569}.
Once again, one should note that a stronger hypothesis about the continuity of the distance function~$d$ is used.
Moreover, the trick to construct the distance function~$\rho$ on~$X \times X$ that swaps the order as in~\eqref{eqn: rho swaps d} is crucial for the proof of the next theorem.

\begin{theorem}
  Let~$d$ be a distance function on~$X$ which is right complete, right Hausdorff, and jointly sequentially~$d$-continuous.
  Let~$\varphi : X \to \R \cup \{+\infty\}$ be a right sequentially~$d$-lower semicontinuous function bounded below and~$T : X \multimap X$ be a set-valued map such that
  \begin{align*}
    \varphi(y) \leq \varphi(x) - d(x,y) + \iota_{\gr T}(x,y)
  \end{align*}
  holds for all~$x,y \in X$.
  If the graph~$\gr T$ is right sequentially~$\rho$-closed in~$X \times X$, where~$\rho$ is a distance function on~$X \times X$ defined by
  \begin{align}\label{eqn: rho swaps d}
    \rho((x,y),(x',y')) := d(x',x) + d(y',y).
  \end{align}
  Then~$T$ has a fixed point, {\itshape i.e.}~$\fix(T) := \{x \in X \mid x \in T(x)\} \neq \emptyset$.
  Moreover these fixed points are end points, {\itshape i.e.}~$\fix(T) = \Endpoints(T) := \{x \in X \mid T(x) = \{x\} \}$.
\end{theorem}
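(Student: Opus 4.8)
The plan is to apply the Ekeland variational principle (Theorem~\ref{thm: evp}) to the potential $\varphi$, to take the resulting Ekeland point as the candidate, and to certify that it is an endpoint of $T$ by pushing a limit of graph points back into $\gr T$ along the order‑swapping distance $\rho$.

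First I would run Ekeland's theorem. Joint sequential $d$-continuity implies in particular that $d$ is right sequentially $d$-lower semicontinuous in its first argument and right sequentially $d$-continuous in its second, so -- together with right completeness and the right Hausdorff property -- every hypothesis of Theorem~\ref{thm: evp} is met for $\varphi$. Choosing $z_0$ with $\varphi(z_0)<\inf_X\varphi+1$ and taking $\varepsilon=1$ gives $\bar z\in X$ with $d(\bar z,z_0)\le 1$, $\varphi(\bar z)+d(\bar z,z_0)\le\varphi(z_0)$, and
\begin{equation}\label{eqn: caristi-ekeland}
  \varphi(z)+d(z,\bar z)\ \ge\ \varphi(\bar z)\qquad\text{for all }z\in X .
\end{equation}
I would then claim that $\bar z$ is an endpoint of $T$, that is $T(\bar z)=\{\bar z\}$; this is the crux, and both $\fix(T)\neq\emptyset$ and $\fix(T)\subseteq\Endpoints(T)$ follow from it.

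The structural fact to isolate is that every slice $T(x)$ is right sequentially $d$-closed: if $y_i\in T(x)$ is right $d$-convergent to $y$, then by joint continuity $d(y_i,y)\to d(y,y)=0$ as well, so $\rho\big((x,y),(x,y_i)\big)=d(x,x)+d(y_i,y)\to 0$, i.e.\ the graph points $(x,y_i)$ are right $\rho$-convergent to $(x,y)$, and right $\rho$-closedness of $\gr T$ forces $(x,y)\in\gr T$. Hence $\big(T(\bar z),d\big)$ is again right complete and right Hausdorff, and $\varphi|_{T(\bar z)}$ is right sequentially $d$-lower semicontinuous and bounded below. Now for $y\in T(\bar z)$ (assuming, as is standard, that $T$ has nonempty values), the Caristi hypothesis at $(\bar z,y)\in\gr T$ reads $\varphi(y)+d(\bar z,y)\le\varphi(\bar z)$, and \eqref{eqn: caristi-ekeland} at $z=y$ gives $\varphi(y)+d(y,\bar z)\ge\varphi(\bar z)$, so that
\[
  0\ \le\ d(\bar z,y)\ \le\ \varphi(\bar z)-\varphi(y)\ \le\ d(y,\bar z),
\]
in particular $\varphi(y)\le\varphi(\bar z)$. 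To finish I would show $\inf_{T(\bar z)}\varphi=\varphi(\bar z)$: apply Theorem~\ref{thm: evp} once more to $\varphi|_{T(\bar z)}$ on the right complete space $(T(\bar z),d)$ (equivalently, to $\varphi+\iota_{T(\bar z)}$ over $X$) near this infimum, and combine the resulting variational inequality with \eqref{eqn: caristi-ekeland} and the Caristi inequality along a minimizing sequence $(y_i)\subset T(\bar z)$ to force $d(\bar z,y_i)\to 0$; then $(\bar z,y_i)\in\gr T$ is right $\rho$-convergent to $(\bar z,\bar z)$ (joint continuity upgrading right $d$-convergence of the coordinates to right $\rho$-convergence of the pairs), so $\rho$-closedness puts $(\bar z,\bar z)\in\gr T$, and the displayed estimate together with $\inf_{T(\bar z)}\varphi=\varphi(\bar z)$ kills every remaining point of $T(\bar z)$. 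Finally $\Endpoints(T)\subseteq\fix(T)$ is definitional, and for the reverse inclusion one repeats the last step with an arbitrary $x\in\fix(T)$ in place of $\bar z$: the slice $T(x)$ is right $d$-closed by the same argument, and $x\in T(x)$ makes the normalization automatic.

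I expect the step $\inf_{T(\bar z)}\varphi=\varphi(\bar z)$ -- equivalently, passing from $d(\bar z,y)\le d(y,\bar z)$ to $y=\bar z$ -- to be the one and only real difficulty. Since $d$ is neither symmetric nor subadditive, that last bound is vacuous in isolation, and the information must instead be routed through a genuine limit of graph points; this is exactly what $\rho$ is built for. The order swap is chosen so that right $d$-convergence of \emph{both} coordinates coincides with right $\rho$-convergence of the pairs -- joint continuity being precisely what turns the ``right'' convergence one can manufacture (via the right completeness of $d$) into the left‑sided convergence that right $\rho$-convergence secretly demands -- so that a suitably chosen minimizing sequence inside the right $d$-closed slice $T(\bar z)$, once pushed to $\bar z$, carries its graph points back into $\gr T$ at the diagonal point $(\bar z,\bar z)$. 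Arranging that this minimizing sequence actually converges to $\bar z$, and not merely that it drives the values of $\varphi$ down to their infimum over $T(\bar z)$, is where the genuine work lies.
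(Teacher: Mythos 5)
There is a genuine gap, and it sits exactly where you predicted it would. Applying Theorem~\ref{thm: evp} to $\varphi$ on $X$ gives you, for $y \in T(\bar z)$, the two estimates $d(\bar z,y) \leq \varphi(\bar z)-\varphi(y)$ (from the Caristi hypothesis) and $\varphi(\bar z)-\varphi(y) \leq \varepsilon\, d(y,\bar z)$ (from Ekeland), hence $d(\bar z,y) \leq \varepsilon\, d(y,\bar z)$. In a symmetric metric this closes with $\varepsilon<1$ (your choice $\varepsilon=1$ would not suffice even there), but here the two sides are unrelated quantities and the estimate is vacuous, as you say yourself. Your proposed repair --- a second application of Ekeland to $\varphi|_{T(\bar z)}$ and a minimizing sequence $(y_i)\subset T(\bar z)$ pushed back into $\gr T$ via $\rho$-closedness --- does not close the circle: to force $d(\bar z,y_i)\to 0$ you need $\inf_{T(\bar z)}\varphi=\varphi(\bar z)$, and nothing in the construction delivers this, because the only upper bound Ekeland gives on $\varphi(\bar z)-\varphi(y)$ is again in terms of $d(y,\bar z)$, the distance with the arguments in the unusable order. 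Moreover, even landing $(\bar z,\bar z)$ in $\gr T$ would only give $\bar z\in\fix(T)$, not the elimination of the other points of $T(\bar z)$; that elimination again requires the unproven identity $\inf_{T(\bar z)}\varphi=\varphi(\bar z)$. Your closing sentence concedes that this is ``where the genuine work lies,'' so the proof is not complete.

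The paper's resolution is structurally different and worth contrasting with your plan: it never applies Ekeland to $\varphi$ on $X$. Instead it applies the weak Ekeland principle on the product space $(X\times X,\rho)$ to the function $f(x,y)=\varphi(x)-(1-\varepsilon)d(x,y)+\iota_{\gr T}(x,y)$ with $\varepsilon\in(0,1/2)$. The point of the order swap in \eqref{eqn: rho swaps d} is not (only) to make slices or the graph closed under the right convergence you can manufacture --- it is that the Ekeland perturbation evaluated at the test pair $(\bar y,\bar z)$ against the base pair $(\bar x,\bar y)$ equals $\varepsilon[d(\bar x,\bar y)+d(\bar y,\bar z)]$, i.e., exactly the distances that the Caristi inequality controls, \emph{with the same order of arguments}. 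The resulting chain $0\leq\varphi(\bar x)-\varphi(\bar y)-d(\bar x,\bar y)\leq-(1-2\varepsilon)d(\bar y,\bar z)$ then forces $d(\bar y,\bar z)=0$ purely algebraically, with no symmetry, no triangle inequality, and no limiting argument. You correctly sensed that $\rho$ ``is built for'' converting right convergence into the needed left-sided information, but the decisive use of $\rho$ is as the perturbation kernel of the variational principle itself, not as a closedness device; without that, the one-sided estimate you obtain on $X$ cannot be upgraded.
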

\begin{proof}
  Let~$\varepsilon \in (0,1/2)$ and define a function~$f : X \times X \to \R \cup \{+\infty\}$ by
  \begin{align*}
    f(x,y) := \varphi(x) - (1-\varepsilon)d(x,y) + \iota_{\gr T}(x,y).
  \end{align*}
  Following from the continuity assumptions of~$f$ and~$d$ as well as the closedness assumption of~$\gr T$, the function~$f$ is right sequentially~$d$-lower semicontinuous.
  From the assumptions on~$d$, we could see that~$\rho$ satisfies the hypotheses of Theorem~\ref{thm: evp weak}.
  Applying Theorem~\ref{thm: evp weak}, there exists~$(\bar{x},\bar{y}) \in \gr T$ such that
  \begin{align}\label{eqn: caristi evp min}
    f(\bar{x},\bar{y}) \leq f(x,y) + \varepsilon \rho((x,y),(\bar{x},\bar{y}))
  \end{align}
  for every~$(x,y) \in \gr T$.
  Now take~$\bar{z} \in T(\bar{y})$ and~\eqref{eqn: caristi evp min} implies
  \begin{align*}
    \varphi(\bar{x}) - (1-\varepsilon)d(\bar{x},\bar{y})
      &\leq \varphi(\bar{y}) - (1-\varepsilon)d(\bar{y},\bar{z}) + \varepsilon \rho((\bar{y},\bar{z}),(\bar{x},\bar{y})) \\
      & = \varphi(\bar{y}) - (1-\varepsilon)d(\bar{y},\bar{z}) + \varepsilon [d(\bar{x},\bar{y}) + d(\bar{y},\bar{z})].
  \end{align*}
  Rearranging yields
  \begin{align*}
    0 \leq \varphi(\bar{x}) - \varphi(\bar{y}) - d(\bar{x},\bar{y}) \leq -(1-2\varepsilon)d(\bar{y},\bar{z}).
  \end{align*}
  It must be the case that~$\bar{y} = \bar{z}$ and hence~$\bar{y} \in T(\bar{y})$, proving that~$\fix(T) \neq \emptyset$.
\end{proof}

\section{Equilibrium problems}

The solution of an equilibrium problem associated to a bifunction~$F : X \times X \to \R$ over a constraint set~$C \subset X$ is any point $\bar{x} \in C$ such that
\begin{align}\label{eqn: equilibrium problem}
  F(\bar{x},y) \geq 0 \quad (\forall y \in C).
\end{align}
Let us write~$\EP(F,C) := \{x \in C \mid F(x,y) \geq 0 \quad (\forall y \in C) \}$ to denote the set of all solutions of the above problem.
The problem itself was originated in~\cite{zbMATH03472891} (see also \cite{zbMATH01090020}).
It is possible to show an existence of a solution of an equilibrium problem by applying Ekeland variational principle, when the value of the associated bifunction could be appropriately lower estimated~\cite{zbMATH07347459}.

Shown in the following is an existence theorem for an equilibrium problem~\eqref{eqn: equilibrium problem}.
Note yet again the strengthened continuity of the distance function.
\begin{theorem}
  Let~$d$ be a distance function on~$X$ which is right complete, right Hausdorff, right sequentially~$d$-continuous separately in the both argument.
  Also let~$F : X \times X \to \R$ be a bifunction which is right sequentially~$d$-upper semicontinuous in its first argument, and that has a lower estimate 
  \begin{align}\label{eqn: lower estimate}
    F(x,y) \geq \varphi(y) - \varphi(x), \quad (\forall y \in X).
  \end{align}
  for some right sequentially~$d$-lower semicontinuous function~$\varphi : X \to \R$ which is bounded below.
  If~$X$ is right sequentially~$d$-compact, then~$\EP(F,X) \neq \emptyset$.
\end{theorem}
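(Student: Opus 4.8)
The plan is to run the Ekeland variational principle (Theorem~\ref{thm: evp}) against the function $\varphi$ with a sequence of shrinking perturbation parameters, convert the resulting approximate-minimality inequalities into pointwise lower bounds for $F$ via the estimate~\eqref{eqn: lower estimate}, and then extract a right $d$-convergent subsequence using right sequential $d$-compactness and pass to the limit.

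First I would note that the separate right sequential $d$-continuity of $d$ in particular makes $d$ right sequentially $d$-lower semicontinuous in its first variable and right sequentially $d$-continuous in its second variable, so Theorem~\ref{thm: evp} applies to $\varphi$. For each $n \in \N$, since $\varphi$ is bounded below I may choose a starting point $z_{0}^{(n)} \in X$ with $\varphi(z_{0}^{(n)}) < \inf_{X}\varphi + \tfrac{1}{n}$ and invoke Theorem~\ref{thm: evp} with $\varepsilon = \tfrac{1}{n}$ to obtain $x_{n} \in X$ such that $\varphi(z) + \tfrac{1}{n} d(z,x_{n}) \geq \varphi(x_{n})$ for every $z \in X$. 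Feeding this into~\eqref{eqn: lower estimate}, i.e. $F(x_{n},z) \geq \varphi(z) - \varphi(x_{n})$, produces
\begin{align*}
  F(x_{n},z) \geq -\tfrac{1}{n}\, d(z,x_{n}) \qquad (\forall z \in X).
\end{align*}

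Next I would use the right sequential $d$-compactness of $X$ to pass to a subsequence $(x_{n_{k}})$ that is right $d$-convergent to some $\bar{x} \in X$, and claim $\bar{x} \in \EP(F,X)$. Fixing $y \in X$, the right sequential $d$-continuity of $d(y,\cdot)$ gives $d(y,x_{n_{k}}) \to d(y,\bar{x})$, which is finite, so the penalty term $\tfrac{1}{n_{k}} d(y,x_{n_{k}})$ tends to $0$ and hence $\liminf_{k\to\infty} F(x_{n_{k}},y) \geq 0$. On the other hand, the right sequential $d$-upper semicontinuity of $F(\cdot,y)$ yields $F(\bar{x},y) \geq \limsup_{k\to\infty} F(x_{n_{k}},y) \geq \liminf_{k\to\infty} F(x_{n_{k}},y) \geq 0$. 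Since $y \in X$ was arbitrary, $\bar{x} \in \EP(F,X)$.

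I expect the only genuine obstacle to be this final limit passage: it must simultaneously exploit the vanishing of the penalty term $\tfrac{1}{n_{k}} d(y,x_{n_{k}})$ — which is exactly where the strengthened continuity of $d$ in its second argument is used, to keep $d(y,x_{n_{k}})$ bounded — and the upper semicontinuity of $F$ in its first argument, which pushes $\limsup_{k} F(x_{n_{k}},y)$ below $F(\bar{x},y)$. One should also double-check along the way that neither the triangle inequality nor symmetry of $d$ is implicitly invoked; it is not.
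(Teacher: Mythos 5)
Your proposal is correct and follows essentially the same route as the paper: apply the Ekeland variational principle to $\varphi$ with vanishing parameters $\varepsilon_{n}$, convert $\varphi(z) + \varepsilon_{n} d(z,x_{n}) \geq \varphi(x_{n})$ into $F(x_{n},z) \geq -\varepsilon_{n} d(z,x_{n})$ via~\eqref{eqn: lower estimate}, extract a right $d$-convergent subsequence by compactness, and pass to the limit using the continuity of $d$ in its second argument and the upper semicontinuity of $F$ in its first. The only (immaterial) difference is that you invoke the strong form Theorem~\ref{thm: evp} with an explicitly chosen almost-minimizing starting point, whereas the paper cites the weak form Theorem~\ref{thm: evp weak}; your write-up is in fact more careful about the final limit passage, which the paper leaves implicit.
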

\begin{proof}
  Let~$(\varepsilon_{i})$ be a sequence of positive reals with~$\lim_{i \to \infty} \varepsilon_{i} = 0$.
  For each~$i \in \N$, Theorem~\ref{thm: evp weak} and~\eqref{eqn: lower estimate} give an existence of a point~$x_{i} \in X$ such that
  \begin{align}\label{eqn: almost EP}
    F(x_{i},y) \geq \varphi(y) - \varphi(x_{i}) \geq -\varepsilon d(y,x_{i})
  \end{align}
  for any~$y \in X$.
  Since~$X$ is right sequentially~$d$-compact, the sequence~$(x_{i})$ has a subsequence~$(x_{i_{k}})$ that is right~$d$-convergent to some point~$\bar{x}$.
  With the continuity assumptions of~$F$ and~$d$, we obtain by passing the limit in~\eqref{eqn: almost EP} that
  \begin{align*}
    F(\bar{x},y) \geq 0
  \end{align*}
  for all~$y \in X$.
  Therefore we have a conclusion that~$\EP(F,X) \neq \emptyset$.
\end{proof}

\section*{Conclusion and remarks}

The main results in this papers are the two variational principles, the types of Borwen-Preiss and of Ekeland.
The versions presented here exploit only on the sequential convergence and continuity assumptions of a distance function, which is not required to be symmetric nor to enjoy the triangle inequality.
Two applications, to fixed point theory and equilibrium problem, are also presented to justify that the theorem could be helpful in generalizing the known scope of results.
One could see that different continuity assumptions on the distance functions are required in each of the sections.

It is important to note that every non-symmetric distance function~$d$ could be \emph{symmetrized} by considering the distance function~$\tilde{d}(x,y) = d(x,y) + d(y,x)$ (one could also put weights to each term~$d(x,y)$ and~$d(y,x)$).
Then the convergent sequences under this symmetrized distance~$\tilde{d}$ are exactly those which are both right and left~$d$-convergent.

\section*{Acknowledgments}
The authors acknowledge the financial support provided by the Center of Excellence in Theoretical and Computational Science (TaCS-CoE), KMUTT.
The first author was supported by the Petchra Pra Jom Klao Ph.D. Research Scholarship from King Mongkut's University of Technology Thonburi (Grant No.69/2563).

\section*{Conflicts of interests}
The authors state no conflicts of interests.

\section*{Author contributions}
\textbf{Conceptualization}, Parin Chaipunya and Poom Kumam;
\textbf{Supervision}, Parin Chaipunya;
\textbf{Investigation}, Natthaya Boonyam and Parin Chaipunya;
\textbf{Writing - original draft}, Natthaya Boonyam and Parin Chaipunya; 
\textbf{Review \& editing}, Parin Chaipunya and Poom Kumam.

\small
\renewcommand\bibname{References}


\end{document}